\DeclareMathOperator{\Alg}{Alg}
\DeclareMathOperator{\Id}{Id}
\DeclareMathOperator{\im}{im}
\DeclarePairedDelimiter{\parens}{\lparen}{\rparen}
\DeclarePairedDelimiter{\braces}{\lbrace}{\rbrace}
\DeclarePairedDelimiter{\angs}{\langle}{\rangle}
\DeclarePairedDelimiterX\setc[2]{\lbrace}{\rbrace}{\,#1 \;\delimsize\vert\; #2\,}
\newcommand{\pid}[1]{\lparen #1 \rbrack}
\newcommand{\alg}[1]{\mathcal{A} \lparen #1 \rparen}
\newcommand{\mlat}[1]{\mathcal{L} \lparen #1 \rparen}
\theoremstyle{definition}
\newtheorem{defn}{Definition}[section]
\newtheorem{thm}[defn]{Theorem}
\newtheorem{prop}[defn]{Proposition}
\newtheorem{cor}[defn]{Corollary}
\title{Beth Definability in the Logic \textbf{KR}}
\author{Jacob Garber}
\begin{document}

\maketitle

\begin{abstract}
\noindent The Beth Definability Property holds for an algebraizable logic if and only if every epimorphism in the corresponding category of algebras is surjective. Using this technique, Urquhart in 1999 showed that the Beth Definability Property fails for a wide class of relevant logics, including \textbf{T}, \textbf{E}, and \textbf{R}. However, the counterexample for those logics does not extend to the algebraic counterpart of the super relevant logic \textbf{KR}, the so-called Boolean monoids. Following a suggestion of Urquhart, we use modular lattices constructed by Freese to show that epimorphisms need not be surjective in a wide class of relation algebras. This class includes the Boolean monoids, and thus the Beth Definability Property fails for \textbf{KR}.

\end{abstract}

\section{Introduction}

Relevant logics were first introduced to avoid the \emph{paradoxes of material implication}, which are counterintuitive inferences that result from a mismatch between the intuitive meaning of implication and its formalization in classical logic. This work lead to the development of a wide swath of relevant logics, spanning from the basic logic \textbf{B} to the logic of relevant implication \textbf{R}, with many other relevant logics (such as ticket entailment \textbf{T} and relevant entailment \textbf{E}) in between. A comprehensive description of these logics and relevant logic in general can be found in any of \cite{anderson1975, anderson1992, dunn2002, routley1982, brady2003}. In this paper we will focus our attention on the logic \textbf{KR}, which consists of adding the paradoxical axiom \( (A \land \neg A) \rightarrow B \) to \textbf{R}. Despite being stronger than \textbf{R} and thus not a purely relevant logic, \textbf{KR} avoids various other paradoxes of material implication and does not collapse to classical logic (see Kerr \cite{kerr2019}). Following Dunn and Restall \cite{dunn2002}, we call any such logic a \emph{super relevant} logic. Several important properties of classical logic fail for the relevant logics, including the \emph{Beth Definability Property}.

\begin{defn}
    Let \( L \) be a propositional logic and \( \Sigma \) a set of formulas from \( L \) containing a variable \( p \). For a new variable \( q \), let \( \Sigma[p/q] \) denote the result of replacing all instances of \( p \) with \( q \). We say \( \Sigma \) \emph{implicitly defines} \( p \) if
    \begin{equation*}
        \Sigma \cup \Sigma[p/q] \vdash p \leftrightarrow q.
    \end{equation*}
    Alternatively, we say \( \Sigma \) \emph{explicitly defines} \( p \) if there is a formula \( A \) containing only the variables in \( \Sigma \) without \( p \), such that
    \begin{equation*}
        \Sigma \vdash p \leftrightarrow A.
    \end{equation*}
    A logic \( L \) is said to have the \emph{Beth Definability Property} if for any set of formulas \( \Sigma \) and variable \( p \), if \( \Sigma \) implicitly defines \( p \), then \( \Sigma \) also explicitly defines \( p \).
\end{defn}

The well-known Beth Definability Theorem states that the Beth Definability Property holds for classical propositional logic. However, as shown by Urquhart \cite{urquhart1999} in 1999 and extended by Blok and Hoogland \cite[Corollary 4.15]{blok2006} in 2006, the Beth Definability Property fails for all relevant logics between \textbf{B} and \textbf{R} (inclusive). The techniques of those papers rely on the fact that Boolean negation is implicitly (but not explicitly) definable in those logics. This approach does not extend to \textbf{KR}, where Boolean negation is identified with relevant negation and is thus explicitly definable. However, Urquhart conjectured that the Beth Definability Property fails for \textbf{KR} as well, and outlined a possible method of attack using algebraic logic.

In algebraic logic, every algebraizable logic \( L \) has a corresponding category of algebras \( \Alg L \). For instance, the algebras of classical logic are the Boolean algebras, and those of intuitionistic logic are the Heyting algebras. Using this correspondence, it is often possible to translate properties of a logic into properties of its corresponding algebra.
\begin{defn}
    For objects \( A, B \) in a category \( \mathcal{C} \), we say a map \( f : A \to B \) is an \emph{epimorphism} if for any other object \( C \) and maps \( g, h : B \to C \),
    \begin{equation*}
        \text{if } g \circ f = h \circ f, \text{ then } g = h.
    \end{equation*}
\end{defn}
The Beth Definability Property holds in an algebraizable logic \( L \) iff in the corresponding algebra \( \Alg L \) all epimorphisms are surjective (the \emph{ES} property). This correspondence was first proven by N\'{e}meti in \cite[Section 5.6]{henkin1985}, and further developed by Blok and Hoogland in \cite{blok2006}. Intuitively, one can think of epimorphisms as being implicit definitions, and surjections as being explicit ones. For algebras \( A \subseteq B \), we say \( A \) is an \emph{epic subalgebra} of \( B \) if the inclusion map \( i : A \to B \) is an epimorphism. Of course, this inclusion map is surjective iff \( A = B \). To disprove the Beth Definability Property for a logic, it thus suffices to find a proper epic subalgebra in its corresponding category. To apply this to \textbf{KR}, we will analyze epimorphisms in the category of algebras for \textbf{KR}, the \emph{Boolean monoids}. Boolean monoids are closely related to relation algebras, and can be equivalently defined as \emph{dense symmetric relation algebras}.

We will tackle this problem using the approach described in Urquhart \cite[Problem 5.3]{urquhart2017}. As discussed in that paper, there is a general correspondence between Boolean monoids and modular lattices. Every Boolean monoid contains a modular lattice, and given a modular lattice \( L \), one can construct a corresponding Boolean monoid \( \alg{L} \) that contains an isomorphic copy of \( L \) as a sublattice. As shown by Freese in \cite[Theorem 3.3]{freese1979}, there exist modular lattices \( A \) and \( B \) such that \( A \) is a proper epic sublattice of \( B \). Using the above correspondence, we extend this to the construction of a proper epic sub-Boolean monoid, which shows the Beth Definability Property fails for \textbf{KR}. This construction is rather general, and in fact shows that ES fails for a wide class of relation algebras that includes the Boolean monoids.

\section{Boolean Monoids and Modular Lattices}

As shown by Anderson and Belnap in \cite[Section 28.2.3]{anderson1975}, the algebraic counterpart of the logic \textbf{R} with truth constant \( t \) (sometimes denoted \( \textbf{R}^\textbf{t} \)) is the variety of \emph{De Morgan monoids}. These are De Morgan lattices with a commutative monoid operation. The addition of the axiom \( (A \land \neg A) \rightarrow B \) to \textbf{R} corresponds to adding the axiom \( a \land \neg a = 0 \) to the algebra, which reduces the De Morgan lattice to a Boolean algebra. Such objects, which we call \emph{Boolean monoids}, are the algebraic counterpart of \textbf{KR}. A particularly useful description of Boolean monoids is in terms of relation algebras. The following axiomatization of relation algebras is taken from Givant \cite[Definition 2.1]{introrel}.

\begin{defn}
    A \emph{relation algebra} is an algebra \( \angs*{A, \lor, \neg, \circ, ^{\smallsmile}, t} \) such that for all \( a, b, c \in A \),
    \begin{enumerate}
        \item \( a \lor b = b \lor a \)
        \item \( a \lor (b \lor c) = (a \lor b) \lor c \)
        \item \( \neg (\neg a \lor b) \lor \neg (\neg a \lor \neg b) = a \)
        \item \( a \circ (b \circ c) = (a \circ b) \circ c \)
        \item \( a \circ t = a \)
        \item \( a^{\smallsmile \smallsmile} = a \)
        \item \( (a \circ b)^{\smallsmile} = b^{\smallsmile} \circ a^{\smallsmile} \)
        \item \( \parens*{a \lor b} \circ c = \parens*{a \circ c} \lor \parens*{b \circ c} \)
        \item \( \parens*{a \lor b}^{\smallsmile} = a^{\smallsmile} \lor b^{\smallsmile} \)
        \item \( \parens*{a^{\smallsmile} \circ \neg (a \circ b)} \lor \neg b = \neg b \)
    \end{enumerate}
\end{defn}

With the standard definition of \( a \land b = \neg \parens*{\neg a \lor \neg b} \), axioms 1--3 imply that \( \angs*{A, \lor, \land, \neg} \) is a Boolean algebra, axioms 4--7 imply that \( \angs*{A, \circ, ^{\smallsmile}, t} \) is a monoid with involution, and axioms 8--10 relate the Boolean and monoid operations to each other.

For all \( a, b \in A \), a relation algebra \( A \) is called
\begin{enumerate}
    \item \emph{abelian} if \( a \circ b = b \circ a \),
    \item \emph{symmetric} if \( a^{\smallsmile} = a \),
    \item \emph{dense} if \( a \leq a \circ a \).
\end{enumerate}
In particular, a Boolean monoid can be equivalently defined as a \emph{dense symmetric relation algebra}.

One important result from the theory of relation algebras is that every abelian relation algebra contains a special set of elements that form a modular lattice.

\begin{defn}
    A lattice \( L \) is called \emph{modular} if for all \( x, y, z \in L \),
    \begin{equation*}
        x \leq z \implies x \lor (y \land z) = (x \lor y) \land z.
    \end{equation*}
    This implication is equivalent to the following dual identities:
    \begin{align*}
        \parens*{x \land y} \lor \parens*{x \land z} &= x \land \parens*{y \lor \parens*{x \land z}}, \\
        \parens*{x \lor y} \land \parens*{x \lor z} &= x \lor \parens*{y \land \parens*{x \lor z}}.
    \end{align*}
\end{defn}

\begin{defn}
    For a relation algebra \( A \), an element \( a \in A \) is called \emph{reflexive} if \( t \leq a \), \emph{symmetric} if \( a^{\smallsmile} = a \), and \emph{transitive} if \( a \circ a \leq a \). An element with all three of these properties is a \emph{reflexive equivalence} element. Define \( \mlat{A} \) to be the set of all reflexive equivalence elements of \( A \).
\end{defn}

\begin{thm}
    \label{thm:modularlat}
    For an abelian relation algebra \( A \), the set of reflexive equivalence elements \( \mlat{A} \) is closed under fusion and meet, and forms a bounded modular lattice. Join is given by \( a \circ b \), meet by \( a \land b \), \( t \) is the bottom element, and 1 is the top.
\end{thm}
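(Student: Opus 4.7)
The plan is to verify each claim about $\mlat{A}$ in turn, reserving modularity for last since it is the only nontrivial part.

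First, I would verify closure and the bounds. That $t \in \mlat{A}$ and $1 \in \mlat{A}$ is immediate from the axioms (using $t^{\smallsmile} = t$, $1^\smallsmile = 1$, $t \circ t = t$, $1 \circ 1 \leq 1$). For closure under $\land$: if $a,b \in \mlat{A}$, then $t \leq a \land b$ since $t \leq a,b$; we have $(a \land b)^{\smallsmile} = a^{\smallsmile} \land b^{\smallsmile} = a \land b$ (using the standard relation-algebra fact $(\neg x)^{\smallsmile} = \neg x^{\smallsmile}$ together with axiom~9); and $(a \land b) \circ (a \land b) \leq a \circ a \leq a$, and symmetrically $\leq b$, so $\leq a \land b$. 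For closure under $\circ$: the symmetry calculation $(a \circ b)^{\smallsmile} = b^{\smallsmile} \circ a^{\smallsmile} = b \circ a = a \circ b$ crucially uses commutativity of $\circ$, i.e.\ the hypothesis that $A$ is abelian; transitivity $(a \circ b) \circ (a \circ b) = (a \circ a) \circ (b \circ b) \leq a \circ b$ again uses abelian; and reflexivity $t = t \circ t \leq a \circ b$ follows from monotonicity of $\circ$ (itself a consequence of axiom~8).

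Next I would verify the lattice operations. Since $t \leq b$, monotonicity gives $a = a \circ t \leq a \circ b$, and using $t \circ a = a$ (derivable from axioms 5--7) also $b \leq a \circ b$; conversely, if $a,b \leq c$ with $c$ transitive then $a \circ b \leq c \circ c \leq c$. So $a \circ b$ is the least upper bound in $\mlat{A}$. For meet, the Boolean $a \land b$ lies in $\mlat{A}$ and is the greatest lower bound there because it is already the greatest lower bound in the ambient Boolean order. The bounds $t$ and $1$ are obvious from the defining condition $t \leq a \leq 1$.

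The main obstacle is the modular law, which here reads: if $x \leq z$ with $x,y,z \in \mlat{A}$, then
\[
    x \circ (y \land z) \;=\; (x \circ y) \land z.
\]
The inequality $x \circ (y \land z) \leq (x \circ y) \land z$ is routine from monotonicity together with $x \circ z \leq z \circ z \leq z$. For the reverse inequality I would invoke Peirce's law (a standard consequence of the relation-algebra axioms, and in particular of axiom~10),
\[
    (x \circ y) \land z \;\leq\; x \circ \bigl(y \land (x^{\smallsmile} \circ z)\bigr).
\]
Applying this and using $x^{\smallsmile} = x$ together with $x \circ z \leq z \circ z \leq z$ (which needs $x \leq z$ and transitivity of $z$) yields
\[
    (x \circ y) \land z \;\leq\; x \circ (y \land z),
\]
completing the proof. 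The only delicate point is citing or deriving Peirce's law cleanly from the given axiomatization; everything else is a short check.
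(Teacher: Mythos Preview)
Your argument is correct. The paper itself gives no proof of this theorem: it simply cites Givant \cite[Corollary~5.17]{introrel}. Your direct verification is essentially the standard one found there, the key step for modularity being the relation-algebraic Dedekind (modular) law $(x \circ y) \land z \leq x \circ \bigl(y \land (x^{\smallsmile} \circ z)\bigr)$, which you are calling Peirce's law; the name is nonstandard, but the inequality and its use are exactly right.
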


\begin{proof}
    See Givant \cite[Corollary 5.17]{introrel}.
\end{proof}

We can also in some sense reverse the above theorem, and use a modular lattice to construct a relation algebra.

\begin{defn}
    A \textbf{KR} \emph{frame} or \emph{model structure} is a triple \( F = \angs{S, R, 0} \) of a set \( S \) with ternary relation \( R \) and distinguished element \( 0 \), satisfying:
    \begin{enumerate}
        \item \( R0ab \) iff \( a = b \)
        \item \( Raaa \)
        \item \( Rabc \) implies \( Rbac \) and \( Racb \) \qquad (total symmetry)
        \item \( Rabc \) and \( Rcde \) implies \( \exists f \in S \) such that \( Radf \) and \( Rfbe \) \qquad (Pasch's Postulate)
    \end{enumerate}
    The last condition has close ties to projective geometry and is explored by Urquhart in \cite{urquhart2017}.
\end{defn}

\begin{defn}
    For a \textbf{KR} frame \( F = \angs{S, R, 0} \), the \emph{complex algebra} of \( F \) is the algebra \( \alg{F} = \angs*{P(S), \cup, \cap, \, ^{c}, \circ, t} \), where
    \begin{enumerate}
        \item \( \angs*{P(S), \cup, \cap, \, ^{c}} \) is the Boolean algebra on the power set of \( S \).
        \item \( t = \lbrace 0 \rbrace \) is the monoid identity.
        \item For \( A, B \subseteq S \), fusion is defined as
    \begin{equation*}
        A \circ B = \setc{c \in S}{Rabc \text{ for some } a \in A, b \in B}.
    \end{equation*}
    \end{enumerate}
\end{defn}
We often write \( \alg{S} \) for the complex algebra when the ternary relation and distinguished element are understood from context.

\begin{thm}
    For a \textbf{KR} frame \( F \), the complex algebra \( \alg{F} \) is a Boolean monoid.
\end{thm}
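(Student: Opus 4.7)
The plan is to define the converse on $\alg{F}$ by $A^{\smallsmile} = A$ and then verify the ten relation algebra axioms along with density and symmetry. The Boolean axioms 1--3 hold automatically because $\angs{P(S), \cup, \cap, {}^c}$ is a power set Boolean algebra. Axiom 6 and the symmetry condition are built into the choice of converse; axioms 7 and 9 then reduce to commutativity of $\circ$, which follows directly from total symmetry of $R$: an element $c$ lies in $A \circ B$ iff $Rabc$ for some $a \in A, b \in B$, and by total symmetry this is equivalent to $Rbac$, i.e.\ $c \in B \circ A$.

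For the remaining axioms I would handle the easy cases first. The identity axiom 5 uses frame axiom 1 together with total symmetry: $c \in A \circ \{0\}$ iff $Ra0c$ for some $a \in A$, iff $R0ac$, iff $a = c$. Distributivity (axiom 8) is a mechanical unpacking of the defining formula for $\circ$. Axiom 10, which under $A^{\smallsmile} = A$ reduces to $A \circ (A \circ B)^c \subseteq B^c$, follows by contradiction: an element $c$ in both sides would yield $Radc$ for some $a \in A$ and $d \notin A \circ B$, and total symmetry then rewrites this as $Racd$, forcing $d \in A \circ B$ because $c \in B$. Density, $A \subseteq A \circ A$, is immediate from frame axiom 2, since $Raaa$ holds for every $a \in A$.

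The main obstacle is associativity (axiom 4), which is where Pasch's Postulate earns its keep. To show $A \circ (B \circ C) \subseteq (A \circ B) \circ C$, take $e \in A \circ (B \circ C)$, so there exist $a \in A$ and $x \in B \circ C$ with $Raxe$, and $b \in B$, $c \in C$ with $Rbcx$. Using total symmetry to rewrite $Raxe$ as $Rxae$, I would invoke Pasch's Postulate on the pair $Rbcx$, $Rxae$ to produce some $f$ with $Rbaf$ and $Rfce$. This places $f \in B \circ A = A \circ B$ (using commutativity established above) and exhibits $e \in \{f\} \circ C \subseteq (A \circ B) \circ C$. The reverse inclusion is handled analogously, with commutativity of $\circ$ simplifying the bookkeeping. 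Together with the above, this shows $\alg{F}$ is a dense symmetric relation algebra, i.e.\ a Boolean monoid.
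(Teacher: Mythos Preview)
Your verification is correct. The paper itself does not argue this statement directly but simply cites Urquhart \cite[Section 2]{urquhart2017}, so your axiom-by-axiom check is more explicit than what appears here and follows the standard route one would find in that reference: total symmetry gives commutativity, frame axiom 1 gives the identity law, frame axiom 2 gives density, and Pasch's Postulate gives associativity, exactly as you lay out. One tiny slip worth noting: axiom 9, $(a \lor b)^{\smallsmile} = a^{\smallsmile} \lor b^{\smallsmile}$, becomes the tautology $a \lor b = a \lor b$ under the choice $A^{\smallsmile} = A$ and needs no appeal to commutativity of $\circ$; only axiom 7 genuinely reduces to commutativity. This does not affect the argument.
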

\begin{proof}
    See Urquhart \cite[Section 2]{urquhart2017}.
\end{proof}

\begin{defn}
    For a lattice \( L \) with least element 0, we define the following ternary relation on the elements of \( L \):
    \begin{equation*}
        Rabc \iff a \lor b = a \lor c = b \lor c.
    \end{equation*}
    Then with this relation, \( \angs{L, R, 0} \) is a \textbf{KR} frame iff \( L \) is modular.
\end{defn}

\begin{proof}
    The first three properties of a \textbf{KR} frame follow immediately from the lattice structure of \( L \). The last property, Pasch's Postulate, is equivalent to the modular law on \( L \), which is shown in Urquhart \cite[Theorem 2.7]{urquhart2017}.
\end{proof}

Thus for a modular lattice \( L \) with 0, the \emph{lattice complex algebra} \( \alg{L} \) is a Boolean monoid. An alternative more direct proof of this can be found in Givant \cite[Section 3.7]{introrel}. The definition of a \textbf{KR} frame is also an instance of the more general notion of a \emph{relational structure}, where the construction of a complex algebra can be repeated in the context of Boolean algebras with operators. Givant \cite[Chapter 19]{advancedrel} and \cite[Chapter 1]{dualitybao} go into more detail.

For a lattice complex algebra \( \alg{L} \), a particularly simple description of its reflexive equivalence elements can be given in terms of the ideals of \( L \).

\begin{defn}
    For a lattice \( L \), an \emph{ideal} of \( L \) is a non-empty subset \( J \subseteq L \) such that
    \begin{enumerate}
        \item If \( a, b \in J \), then \( a \lor b \in J \).
        \item If \( a \in J \) and \( b \leq a \), then \( b \in J \).
    \end{enumerate}
    The set of all ideals of \( L \) is denoted \( \Id L \), which forms a lattice with respect to set inclusion.
    A special class of ideals are the \emph{principal ideals}, which are of the form \( \pid{a} = \setc{b \in L}{b \leq a} \). We will sometimes use the notation \( \pid{a}_L \) to emphasize that this is the principal ideal of \( a \) inside \( L \).
\end{defn}

\begin{prop}
    For a lattice \( L \), the principal ideal map
    \begin{align*}
        I : \, &L \to \Id L \\
        &a \mapsto (a]
    \end{align*}
    is an injective homomorphism of lattices.
\end{prop}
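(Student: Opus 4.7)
The plan is to verify three things in turn: that $I$ actually lands in $\Id L$, that it preserves both lattice operations, and that it is injective. Each step follows routinely from the definition of a principal ideal, so I do not anticipate any real obstacle; the only minor subtlety will be pinning down the description of the join operation in $\Id L$.

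First I would check that $\pid{a}$ is indeed an ideal: it is non-empty (it contains $a$), it is downward closed by definition, and if $b, c \leq a$ then $b \lor c \leq a$, so it is closed under binary joins. This shows $I$ is well-defined. For the homomorphism property, the meet case is nearly immediate, since $c \leq a \land b$ iff $c \leq a$ and $c \leq b$, giving $\pid{a \land b} = \pid{a} \cap \pid{b}$; and in $\Id L$ the meet of two ideals is their intersection.

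The join case requires identifying the join in $\Id L$ as the smallest ideal containing the two given ideals. Granting this, I would argue by two inclusions: the ideal $\pid{a \lor b}$ contains both $\pid{a}$ and $\pid{b}$ (since $a, b \leq a \lor b$), so it contains the join $\pid{a} \lor \pid{b}$; conversely, any ideal containing $\pid{a}$ and $\pid{b}$ contains $a$ and $b$, hence contains $a \lor b$ and therefore all of $\pid{a \lor b}$. Thus $I(a \lor b) = I(a) \lor I(b)$.

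Finally, for injectivity, suppose $\pid{a} = \pid{b}$. Since $a \in \pid{a} = \pid{b}$ we get $a \leq b$, and symmetrically $b \leq a$, so $a = b$ by antisymmetry. The main conceptual point throughout is simply the universal property of principal ideals, so the argument is essentially bookkeeping once the join in $\Id L$ has been identified.
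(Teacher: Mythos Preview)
Your argument is correct: each of the four verifications (well-definedness, meet preservation, join preservation, injectivity) goes through exactly as you describe, and the only point requiring a moment's thought---that the join in $\Id L$ of two ideals is the smallest ideal containing both---is a standard fact you have handled cleanly.

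By way of comparison, the paper does not actually prove this proposition at all; it simply cites Gr\"{a}tzer's lattice theory text. So your direct elementary argument is in fact more explicit than what the paper provides. The content of your proof is presumably what lies behind the cited reference, so there is no genuine methodological difference---you have just unpacked a black-box citation into the routine verification it stands for.
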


\begin{proof}
    See Gr\"{a}tzer \cite[Corollary 4, p.~24]{gratzer1998}.
\end{proof}

\begin{thm}
    \label{thm:maddux}
    For a modular lattice \( L \) with least element 0, the set of reflexive equivalence elements \( \mlat{\alg{L}} \) and the set of ideals \( \Id L \) are identical as lattices. That is, \( \mlat{\alg{L}} = \Id L \), and for all ideals \( J, K \in \Id L \)
    \begin{align*}
        J \lor K &= J \circ K \\
        J \land K &= J \cap K.
    \end{align*}
\end{thm}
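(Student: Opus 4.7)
The plan is to first verify the set equality $\mlat{\alg{L}} = \Id L$, after which the agreement of the lattice operations will follow almost immediately from Theorem \ref{thm:modularlat}. Since the ternary relation $R$ on $L$ is totally symmetric, fusion in $\alg{L}$ is commutative, so $\alg{L}$ is an abelian relation algebra and Theorem \ref{thm:modularlat} applies. Moreover, $\alg{L}$ is a Boolean monoid and hence symmetric, so every subset of $L$ is automatically a symmetric element, and a subset $J \subseteq L$ is a reflexive equivalence element iff $0 \in J$ (reflexivity) and $J \circ J \subseteq J$ (transitivity).

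For the inclusion $\Id L \subseteq \mlat{\alg{L}}$, I would start with an ideal $J$ of $L$. Non-emptiness combined with downward closure gives $0 \in J$, and if $c \in J \circ J$ then there exist $a, b \in J$ with $a \lor b = a \lor c = b \lor c$, so $c \leq a \lor b \in J$ by join closure, and hence $c \in J$ by downward closure. For the reverse inclusion, suppose $J \subseteq L$ satisfies $0 \in J$ and $J \circ J \subseteq J$. Given $a, b \in J$, a direct check shows $R a b (a \lor b)$ holds, so $a \lor b \in J \circ J \subseteq J$, giving join closure. For downward closure, given $a \in J$ and $b \leq a$, I use the observation that $a \lor a = a = a \lor b$, so $R a a b$ holds and $b \in J \circ J \subseteq J$; thus $J$ is an ideal.

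Once the underlying sets are identified, both meet operations reduce to set intersection (the intersection of ideals being an ideal), and Theorem \ref{thm:modularlat} gives $J \lor K = J \circ K$ in $\mlat{\alg{L}}$; since $\mlat{\alg{L}}$ and $\Id L$ share the same carrier and the same partial order (subset inclusion in each case), this common join necessarily coincides with the ideal-lattice join. The main mildly clever step is the use of $R a a b$ to extract downward closure from transitivity; everything else is essentially unpacking the definitions of $R$ and of an ideal.
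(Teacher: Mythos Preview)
Your argument is correct. The paper does not actually prove this theorem; it simply cites Maddux \cite[p.~243]{maddux1981}. Your direct verification is therefore strictly more informative than what appears in the paper. The two inclusions for the carrier set are handled cleanly: the use of $Rab(a\lor b)$ to get join closure and of $Raab$ (when $b \le a$) to get downward closure from transitivity are exactly the right observations, and your reduction of the lattice-operation agreement to ``same carrier, same order (inclusion), hence same suprema and infima'' is sound once you have noted that meet in $\mlat{\alg{L}}$ is Boolean meet, i.e.\ intersection. The only cosmetic suggestion is that the appeal to Theorem~\ref{thm:modularlat} is almost unnecessary here: having shown $\mlat{\alg{L}} = \Id L$ as posets under inclusion, the identity $J \lor K = J \circ K$ can also be read off directly, since $J \circ K = \{\, c : c \le a \lor b \text{ for some } a \in J,\, b \in K \,\}$ is exactly the ideal generated by $J \cup K$.
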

\begin{proof}
    See Maddux \cite[p.~243]{maddux1981}.
\end{proof}

\section{Embeddings of Lattice Complex Algebras}

The purpose of this section is to prove Theorem \ref{thm:embed}, which states that for all complete sublattices \( K \) of a modular lattice \( L \), there is a corresponding complete embedding \( \phi : \alg{K} \to \alg{L} \) of Boolean monoids. The construction of this map will rely on the following result from the theory of relation algebras.

\begin{thm}
    \label{thm:aet}
    Let \( A \) be a complete and atomic Boolean monoid, \( U \) the set of atoms of \( A \), and \( B \) a complete Boolean monoid. Suppose \( \phi : U \to B \) is a map with the following properties:
    \begin{enumerate}
        \item The elements \( \phi(u) \) for \( u \in U \) are non-zero, mutually disjoint, and have a join of 1 in \( B \).
        \item \( t = \bigvee \setc{\phi(u)}{u \in U \text{ and  } u \leq t} \)
        \item \( \phi(u) \circ \phi(v) = \bigvee \setc{\phi(w)}{w \in U \text{ and  } w \leq u \circ v} \) for all \( u, v \in U \).
    \end{enumerate}
    Then \( \phi \) extends in a unique way to a complete embedding \( \phi : A \to B \) of Boolean monoids, given by
    \begin{equation*}
    \phi(r) = \bigvee \setc{\phi(u)}{u \in U \text{ and } u \leq r },
    \end{equation*}
    where \( r \) is any element of \( A \).
\end{thm}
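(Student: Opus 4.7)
The plan is to define $\phi$ on all of $A$ by the displayed formula and verify, in turn, that it is well-defined, that it preserves all the Boolean monoid operations, and that it is injective. Uniqueness of the extension is essentially automatic: since $A$ is complete and atomic, every $r \in A$ satisfies $r = \bigvee \setc{u \in U}{u \leq r}$, so any complete homomorphism extending $\phi$ on $U$ must commute with this join and hence agree with the prescribed formula everywhere.

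I would first handle the Boolean and identity parts. Preservation of arbitrary joins is immediate once one notes that in the complete atomic Boolean algebra underlying $A$, an atom $u$ lies below $\bigvee_i r_i$ iff $u \leq r_i$ for some $i$. For complementation, hypothesis (1) says $\setc{\phi(u)}{u \in U}$ is a partition of the top element in $B$, so $\bigvee \setc{\phi(u)}{u \leq \neg r}$ is precisely the Boolean complement of $\bigvee \setc{\phi(u)}{u \leq r}$. Preservation of $t$ is exactly hypothesis (2), and preservation of converse is automatic because both algebras are symmetric. For injectivity, if $r \neq s$, pick an atom $u \leq r$ with $u \not\leq s$; then $\phi(u) \leq \phi(r)$, while the mutual disjointness of the $\phi(v)$'s forces $\phi(u) \land \phi(s) = 0$, so $\phi(r) \neq \phi(s)$.

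The heart of the proof is preservation of fusion. Writing $r$ and $s$ as joins of the atoms below them and using that $\circ$ distributes over arbitrary joins in each argument (in both $A$ and $B$), I would compute
\begin{equation*}
    \phi(r) \circ \phi(s) \;=\; \bigvee_{u \leq r,\, v \leq s} \phi(u) \circ \phi(v) \;=\; \bigvee_{u \leq r,\, v \leq s}\; \bigvee \setc{\phi(w)}{w \in U,\; w \leq u \circ v},
\end{equation*}
the second equality by hypothesis (3). On the other hand, $r \circ s = \bigvee_{u \leq r,\, v \leq s} u \circ v$ in $A$, and because $A$ is complete and atomic, an atom $w$ lies below $r \circ s$ iff it lies below some $u \circ v$ with $u \leq r$ and $v \leq s$. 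The two double joins therefore range over the same set of atoms $\phi(w)$, so $\phi(r \circ s) = \phi(r) \circ \phi(s)$.

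The main obstacle is the fusion step: it hinges on complete additivity of $\circ$ in both $A$ and $B$, so that the joins really do distribute through, together with the atomic-join property of a complete atomic Boolean algebra (if $w$ is an atom and $w \leq \bigvee S$ then $w \leq s$ for some $s \in S$). Everything else reduces to straightforward bookkeeping with hypotheses (1)--(3), and the resulting map is, by construction, the unique extension of $\phi$ that commutes with arbitrary joins, hence a complete embedding of Boolean monoids.
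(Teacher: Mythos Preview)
Your argument is correct. The paper does not actually prove this theorem: its entire proof is a one-line citation to Givant \cite[Theorem 7.13 and Corollary 7.14]{introrel}, specialized to the symmetric case. So there is nothing in the paper to compare against beyond that reference, and what you have written is a faithful unpacking of the standard Atom Extension Theorem in this setting.

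A couple of small remarks on the points you correctly flagged as the load-bearing ones. First, complete additivity of $\circ$ in an arbitrary complete relation algebra (and hence Boolean monoid) is not an axiom but a consequence of the conjugation law (axiom~10): from $a^{\smallsmile} \circ \neg(a \circ b) \leq \neg b$ one derives that $a \circ (-)$ has a right residual, hence preserves all existing joins. You rely on this in both $A$ and $B$, and it is worth saying explicitly rather than leaving it as ``distributes over arbitrary joins.'' Second, your injectivity step uses that $\phi(u) \land \bigvee_{v \leq s} \phi(v) = \bigvee_{v \leq s}(\phi(u) \land \phi(v))$ in $B$; this infinite distributive law holds in every complete Boolean algebra (via $b_i \leq \neg a \lor c$ from $a \land b_i \leq c$), so no atomicity of $B$ is needed. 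With those two facts in hand, every step you sketched goes through, and the formula for $\phi(r)$ is forced by completeness exactly as you say.
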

\begin{proof}
    This is a specialization of Givant \cite[Theorem 7.13 and Corollary 7.14]{introrel} to Boolean monoids.
\end{proof}

We apply this to lattice complex algebras as follows. Recall that for a complete lattice \( L \), a subset \( K \subseteq L \) is a \emph{complete sublattice} iff for all \( S \subseteq K \), \( \bigwedge S \in K \) and \( \bigvee S \in K \), where these infima and suprema are calculated in \( L \).

\begin{thm}
    \label{thm:embed}
    Let \( L \) be a complete modular lattice, \( K \subseteq L \) a complete sublattice, and \( I_K \) and \( I_L \) their respective principal ideal maps. Then there is a unique complete embedding of Boolean monoids \( \phi : \alg{K} \to \alg{L} \) such that \( \phi \circ I_K = I_L \).
\end{thm}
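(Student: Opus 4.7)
The plan is to apply Theorem~\ref{thm:aet} by defining $\phi$ on the atoms of $\alg{K}$ and verifying the three hypotheses. The atoms of $\alg{K}$ are the singletons $\{a\}$ for $a \in K$. The key tool is the projection $a_\bullet : L \to K$ sending $c \in L$ to $a_c := \bigwedge \{b \in K : c \leq b\}$; this infimum lies in $K$ because $K$ is a complete sublattice, and it satisfies $c \leq b \iff a_c \leq b$ for every $b \in K$. In particular $K$ being complete forces $0_K = 0_L$, since $0_L = \bigvee \emptyset \in K$. Define $\phi(\{a\}) := \{c \in L : a_c = a\}$ and extend via the formula in Theorem~\ref{thm:aet}.

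Hypotheses~1 and 2 are routine: the sets $\phi(\{a\})$ are nonempty (each contains $a$), pairwise disjoint, and cover $L$; and $\phi(\{0_K\}) = \{0_L\} = t$. The main work is hypothesis~3, which after unpacking reduces, for each pair of atoms $a, b \in K$, to the equality
\begin{equation*}
    \{e \in L : \exists\, c, d \in L,\, a_c = a,\, a_d = b,\, R_L cde\} = \{e \in L : a \lor b = a \lor a_e = b \lor a_e\}.
\end{equation*}

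The inclusion $\subseteq$ is obtained by chasing joins: from $R_L cde$ with $c \leq a$ and $d \leq b$ one gets $e \leq a \lor b$ and hence $a_e \leq a \lor b$, while $c \leq d \lor e \leq b \lor a_e$ forces $a \leq b \lor a_e$ and symmetrically $b \leq a \lor a_e$, so the three joins coincide. The inclusion $\supseteq$ is the main obstacle and is where modularity is crucial. Given $e$ with $a \lor b = a \lor a_e = b \lor a_e =: m$, set $c := a \land (b \lor e)$ and $d := b \land (a \lor e)$. Repeated use of the modular law shows $c \lor e = d \lor e = c \lor d = (a \lor e) \land (b \lor e)$, establishing $R_L cde$. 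The delicate part is verifying $a_c = a$ (with $a_d = b$ symmetric): if $c \leq a' < a$ for some $a' \in K$, then modularity gives $b \lor c = (a \lor b) \land (b \lor e) = b \lor e$, so $b \lor e \leq b \lor a' = a' \lor b \in K$, hence $a_e \leq a' \lor b$; combined with $b \lor a_e = m$ this forces $a' \lor b = m$. Modularity again yields $a = a \land m = a' \lor (a \land b)$, but $a \land b \leq c \leq a'$, so $a \leq a'$, contradicting $a' < a$.

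Once hypothesis~3 is established, Theorem~\ref{thm:aet} gives a unique complete embedding $\phi: \alg{K} \to \alg{L}$ whose explicit form is $\phi(S) = \{c \in L : a_c \in S\}$. The compatibility $\phi \circ I_K = I_L$ follows from $\phi(\pid{a}_K) = \{c \in L : a_c \leq a\} = \{c \in L : c \leq a\} = \pid{a}_L$, and uniqueness is immediate from Theorem~\ref{thm:aet}, since any complete embedding with $\phi \circ I_K = I_L$ is forced to coincide with the one above on atoms.
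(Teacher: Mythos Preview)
Your proof is correct and follows essentially the same route as the paper's: the same atomic-extension theorem, the same formula for $\phi$ on singletons (your fibres $\{c : a_c = a\}$ coincide with the paper's $\pid{a}_L \setminus \bigcup_{b<a,\,b\in K}\pid{b}_L$), and the same modular witnesses $c = a \land (b \lor e)$, $d = b \land (a \lor e)$ for the hard inclusion in hypothesis~3. The only differences are presentational---your closure-operator notation $c \mapsto a_c$ streamlines hypotheses~1--2 and the compatibility check, while the paper spells out the $c \lor d = c \lor e$ computation and the uniqueness derivation in more detail than you do.
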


\begin{proof}
    We will first show uniqueness to determine what the map \( \phi \) should be, and then use that definition to show it is a complete embedding.

    Suppose that \( \phi : \alg{K} \to \alg{L} \) is a complete embedding with \( \phi \circ I_K = I_L \). Since \( \phi \) is a complete homomorphism, it is determined by its values on the singleton subsets of \( K \), which are the atoms of \( \alg{K} \). For all \( a \in K \), we can write \( \pid{a}_K \) as the disjoint union
    \begin{alignat*}{2}
        \pid{a}_K &= \braces*{a} \cup \bigcup_{\substack{b \, < \, a \\ b \, \in \, K}} \pid{b}_K \\
        \implies \phi \parens{\pid{a}_K} &= \phi \parens*{\braces*{a}} \cup \bigcup_{\substack{b \, < \, a \\ b \, \in \, K}} \phi \parens*{\pid{b}_K} \qquad && \phi \text{ is a complete homomorphism} \\
        \implies \pid{a}_L &= \phi \parens*{\braces*{a}} \cup \bigcup_{\substack{b \, < \, a \\ b \, \in \, K}} \pid{b}_L && \phi \circ I_K = I_L \\
        \implies \phi \parens*{\braces*{a}} &= \pid{a}_L \mathbin{\big\backslash} \bigcup_{\substack{b \, <  \, a \\ b \, \in \, K}} \pid{b}_L && \phi \text{ preserves disjoint unions}
        \end{alignat*}
        Thus \( \phi \) is uniquely determined.

        So then, let \( U \) be the set of singletons in \( \alg{K} \), and define the map \( \phi : U \to \alg{L} \) by
        \begin{equation*}
            \phi \parens*{\braces*{a}} = \pid{a}_L \mathbin{\big\backslash} \bigcup_{\substack{b \, <  \, a \\ b \, \in \, K}} \pid{b}_L
        \end{equation*}
        We will verify the three conditions of Theorem \ref{thm:aet} to show that this can be extended to a complete embedding of Boolean monoids.
    \begin{enumerate}
        \item It suffices to show that the sets \( \phi \parens*{\braces*{a}} \) for \( a \in K \) are non-empty, mutually disjoint, and cover \( L \).
    \begin{itemize}
        \item All sets are non-empty, since \( a \in \phi \parens*{\braces*{a}} \) for any \( a \in K \).
        \item Let \( a, b \in K \) be distinct elements. Then \( a \land b \leq a \), and \( a \land b \leq b \). Since \( a \) and \( b \) are distinct, at least one of the previous inequalities must be strict, so without loss of generality suppose \( a \land b < a \). Since \( K \) is a sublattice, \( a \land b \in K \).
            Now suppose \( x \in \phi \parens*{\braces*{a}} \cap \phi \parens*{\braces*{b}} \). Then \( x \leq a \) and \( x \leq b \), so \( x \in \pid{a \land b}_{L} \). Since \( a \land b < a \), this implies \( x \notin \phi \parens*{\braces*{a}} \), which is a contradiction. Thus \( \phi \parens*{\braces*{a}} \) and \( \phi \parens*{\braces*{b}} \) are disjoint.

    \item For an arbitrary \( x \in L \), let
        \begin{equation*}
            a = \bigwedge F_x
        \end{equation*}
        where \( F_x = \setc{b \in K}{x \leq b} \). Since \( K \) is a complete sublattice, this infimum exists and is an element of \( K \). By definition, \( x \) is a lower bound for \( F_x \), so \( x \leq a \), and thus \( x \in \pid{a}_L \). Since \( a \in F_x \), we in fact have \( a = \min F_x \). Furthermore, for any other \( b \in K \) with \( b < a \), it cannot be that \( x \in \pid{b}_L \), since then we would have \( a \leq b \), which is impossible. Thus
    \begin{align*}
        x \in \pid{a}_L \mathbin{\big\backslash} \bigcup_{\substack{b \, < \, a \\ b \, \in \, K}} \pid{b}_L = \phi \parens*{\braces*{a}}.
    \end{align*}
    So the images of \( \phi \) cover \( L \).
    \end{itemize}
    \item The monoid identity \( t = \braces*{0} \) is itself a singleton, and from the definition of \( \phi \) we trivially have
    \begin{equation*}
        \phi \parens*{t} = \phi \parens*{\braces*{0}} = \pid{0}_L = \braces*{0} = t.
    \end{equation*}
    \item From left to right, let \( a, b \in K \), and suppose that \( z \in \phi \parens*{\braces*{a}} \circ \phi \parens*{\braces*{b}} \). We wish to show \( z \in \phi \parens*{\braces*{c}} \), for some \( c \in K \) with \( \braces*{c} \subseteq \braces*{a} \circ \braces*{b} \).

    By assumption \( Rxyz \) for some \( x \in \phi \parens*{\braces*{a}} \) and \( y \in \phi \parens*{\braces*{b}} \). From the first condition, we know \( a = \min F_x \), \( b = \min F_y \), and \( z \in \phi \parens*{\braces*{c}} \), where \( c = \min F_z \). Since \( x \leq a \) and \( y \leq b \), we have
    \begin{alignat*}{2}
        x \lor y &\leq a \lor b \\
        \implies x \lor z &\leq a \lor b \qquad && \text{since } Rxyz \\
        \implies z &\leq a \lor b \\
        \implies c &\leq a \lor b && \text{minimality of } c \\
        \implies a \lor c &\leq a \lor b.
    \end{alignat*}
    Symmetrically, we conclude \( a \lor b \leq a \lor c \), and so \( a \lor b = a \lor c \). A similar argument shows \( a \lor c = b \lor c \). Thus \( Rabc \), and so \( c \in \braces{a} \circ \braces{b} \) as desired.

    From right to left, let \( a, b, c \in K \) and suppose \( \braces{c} \subseteq \braces{a} \circ \braces{b} \). We wish to show \( \phi \parens*{\braces*{c}} \subseteq \phi \parens*{\braces*{a}} \circ \phi \parens*{\braces*{b}} \). That is, for all \( z \in \phi \parens*{\braces*{c}} \), there exists \( x \in \phi \parens*{\braces*{a}} \) and \( y \in \phi \parens*{\braces*{b}} \) such that \( Rxyz \). To do this, we use an approach similar to the one of Maddux in \cite[p.~244]{maddux1981}. For a given \( z \), let
    \begin{align*}
        x &= (b \lor z) \land a \\
        y &= (a \lor z) \land b.
    \end{align*}
            We first show that \( a = \min F_x \). From the definition of \( x \) we have \( a \land b \leq x \leq a \), so \( a \in F_x \). Now let \( d \in F_x \) be any other element. Then \( d \in K \) with \( x \leq d \), so \( x \leq a \land d \). Furthermore,
    \begin{alignat*}{2}
        x \lor b &= ((b \lor z) \land a) \lor b \qquad && \text{definition of } x\\
                 &= (b \lor z) \land (a \lor b) && \text{modularity} \\
                 &= (b \lor z) \land (b \lor c) && \text{since } Rabc \\
                 &= b \lor z && \text{since } z \leq c.
    \end{alignat*}
    Since \( z \leq b \lor z = x \lor b \), we then have
    \begin{alignat*}{2}
        z &\leq (a \land d) \lor b \qquad && \text{since } x \leq a \land d \\
        \implies c &\leq (a \land d) \lor b && \text{minimality of } c \\
        \implies b \lor c &\leq (a \land d) \lor b \\
        \implies a \lor b &\leq (a \land d) \lor b && \text{since } Rabc.
    \end{alignat*}
    Using absorption, this implies
    \begin{alignat*}{2}
     a &\leq ((a \land d) \lor b) \land a \\
       &= (a \land d) \lor (a \land b) \qquad && \text{modularity} \\
       &= a \land d && \text{since } a \land b \leq x \leq a \land d.
    \end{alignat*}
    Therefore \( a \leq d \), so \( a = \min F_x \) as wanted. Thus \( x \in \phi \parens*{\braces*{a}} \), and a symmetric argument shows that \( y \in \phi \parens*{\braces*{b}} \).

    Now we show \( Rxyz \). Using modularity,
    \begin{align*}
        x \lor z &= ((b \lor z) \land a) \lor z \\
                 &= (b \lor z) \land (a \lor z) \\
                 &= (a \lor z) \land (b \lor z) \\
                 &= ((a \lor z) \land b) \lor z \\
                 &= y \lor z.
    \end{align*}
    Since \( x \leq a \) and \( z \leq c \), we have \( x \lor z \leq a \lor c = a \lor b \). Thus
    \begin{alignat*}{2}
        x \lor z &= (a \lor b) \land (x \lor z) \\
                 &= (a \lor b) \land (a \lor z) \land (b \lor z) && \text{from above} \\
                 &= (a \lor (b \land (a \lor z))) \land (b \lor z) \qquad && \text{modularity} \\
                 &= (b \lor z) \land (a \lor ((a \lor z) \land b)).
    \end{alignat*}
    Using that \( (a \lor z) \land b \leq b \leq b \lor z \) and a final application of the modular law, we thus have
    \begin{align*}
        x \lor z &= ((b \lor z) \land a) \lor ((a \lor z) \land b) \\
                 &= x \lor y.
    \end{align*}
    Thus \( Rxyz \), and the condition is shown.
    \end{enumerate}
    Thus by Theorem \ref{thm:aet}, \( \phi \) extends uniquely to a complete embedding \( \phi : \alg{K} \to \alg{L} \) of Boolean monoids, where for all \( S \subseteq K \),
    \begin{equation*}
    \phi \parens*{S} = \bigcup_{a \, \in \, S} \phi \parens*{\braces*{a}}.
    \end{equation*}
    We use this definition to show that \( \phi \circ I_K = I_L \). For any \( a \in K \), \( \pid{a}_K \) is a reflexive equivalence element of \( \alg{K} \) by Theorem \ref{thm:maddux}. Since \( \phi \) preserves equational properties, the image \( \phi \parens*{\pid{a}_K} \) is also a reflexive equivalence element of \( \alg{L} \), and thus an ideal of \( L \) by the same theorem. From the definition of \( \phi \),
    \begin{equation*}
        a \in \phi \parens*{\braces*{a}} \subseteq \phi \parens*{\pid{a}_K},
    \end{equation*}
    and so \( \pid{a}_L \subseteq \phi \parens*{\pid{a}_K} \) from the definition of an ideal.

    On the other hand,
    \begin{equation*}
        \phi \parens*{\pid{a}_K} = \bigcup_{\substack{b \, \leq \, a \\ b \, \in \, K}} \phi \parens*{\braces*{b}}
                                \subseteq \bigcup_{\substack{b \, \leq \, a \\ b \, \in \, K}} \pid{b}_L
                                = \pid{a}_L,
    \end{equation*}
    and so \( \phi \parens*{\pid{a}_K} = \pid{a}_L \).
\end{proof}

\section{An Epimorphism That is Not Surjective}

In this section, let \( \mathsf{LRA} \) be the class of all subalgebras of lattice complex algebras, \( \mathsf{ARA} \) the variety of abelian relation algebras, and \( \mathsf{R} \) any class of relation algebras with \( \mathsf{LRA} \subseteq \mathsf{R} \subseteq \mathsf{ARA} \). We will now use the following general construction and modular lattices constructed by Freese to show that ES fails for \( \mathsf{R} \).

Let \( L \) be a complete modular lattice, and \( K \subseteq L \) a complete sublattice. The principal ideal map \( I_L : L \to \Id L \) is an embedding of lattices, and from Theorem \ref{thm:maddux} we know \( \Id L = \mlat{\alg{L}} \). Thus, let \( K' = I_L(K) \) and \( L' = I_L(L) \) be the isomorphic images of \( L \) and \( K \) contained in \( \mlat{\alg{L}} \). In \( \alg{L} \), let \( U \) be the subalgebra generated by \( K' \), and \( V \) the subalgebra generated by \( L' \).
\begin{thm}
    \label{thm:episurj}
    In the above situation, if \( K \) is a proper epic sublattice of \( L \), then \( U \) is a proper \( \mathsf{R} \)-epic subalgebra of \( V \).
\end{thm}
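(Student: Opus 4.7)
My plan is to split the theorem into two claims: properness (\( U \subsetneq V \)) and the epic condition (\( U \hookrightarrow V \) is an epimorphism in \( \mathsf{R} \)). The central tool is Theorem \ref{thm:embed}, applied to the inclusion \( K \subseteq L \), which yields a complete embedding \( \phi : \alg{K} \to \alg{L} \) whose image is a subalgebra of \( \alg{L} \) containing \( K' \), and hence contains \( U \). Combined with Theorem \ref{thm:maddux}, which identifies the principal ideal map \( I_L \) with a lattice embedding \( L \to \mlat{\alg{L}} \) sending \( \lor \) to \( \circ \) and \( \land \) to \( \cap \), this lets me translate both parts of the theorem into statements about modular lattice homomorphisms.

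For properness, I would pick an arbitrary \( a \in L \setminus K \) and show \( \pid{a}_L \in V \setminus U \). Membership in \( V \) is immediate, since \( V \) is generated by the principal ideals of elements of \( L \). For non-membership in \( U \), the proof of Theorem \ref{thm:embed} describes \( \phi(\alg{K}) \) as precisely the collection of unions \( \bigcup_{c \in S} \phi(\braces*{c}) \) with \( S \subseteq K \), where the blocks \( \phi(\braces*{c}) \) partition \( L \). If \( \pid{a}_L \) were such a union, then the unique block containing \( a \), namely \( \phi(\braces*{c}) \) for \( c = \min \setc{b \in K}{a \leq b} \), would have to lie inside \( \pid{a}_L \). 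But then \( c \in \phi(\braces*{c}) \subseteq \pid{a}_L \) forces \( c \leq a \), and combined with \( a \leq c \) we get \( a = c \in K \), a contradiction. Thus \( \pid{a}_L \notin \phi(\alg{K}) \supseteq U \).

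For the epic condition, suppose \( g, h : V \to W \) are \( \mathsf{R} \)-homomorphisms agreeing on \( U \), with \( W \in \mathsf{R} \subseteq \mathsf{ARA} \). Since \( W \) is abelian, Theorem \ref{thm:modularlat} says that \( \mlat{W} \) is a modular lattice; and because \( g \) and \( h \) preserve the operations defining reflexive equivalence elements, the composites \( g \circ I_L \) and \( h \circ I_L \) land in \( \mlat{W} \) and are lattice homomorphisms from \( L \) to \( \mlat{W} \), using that \( I_L \) turns \( \lor \) into \( \circ \) and \( \land \) into \( \cap \), both of which compute join and meet in \( \mlat{W} \). They agree on \( K \) because \( \pid{c}_L \in U \) for every \( c \in K \). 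Since \( K \) is epic in \( L \) as a modular lattice, it follows that \( g \circ I_L = h \circ I_L \) on all of \( L \). Hence \( g \) and \( h \) agree on every generator \( \pid{a}_L \) of \( V \), so \( g = h \).

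The substantive obstacle is the properness step, which requires the explicit block-partition description of \( \phi(\alg{K}) \) from the proof of Theorem \ref{thm:embed} and the delicate observation that a principal ideal \( \pid{a}_L \) is incompatible with that block structure unless \( a \in K \); the epic step is essentially a transport argument, relaying the modular-lattice epic property of \( K \subseteq L \) across the correspondence between lattice operations on \( L \) and relation-algebra operations on \( \alg{L} \).
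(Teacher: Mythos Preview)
Your proposal is correct and follows essentially the same approach as the paper: both halves use the same ingredients (Theorem~\ref{thm:embed} for properness, Theorem~\ref{thm:modularlat} plus the epic hypothesis on \(K\subseteq L\) for the epimorphism part) and the same key deduction that \(\pid{a}_L \in \im\phi\) forces \(a\in K\). The only cosmetic difference is that the paper argues properness by assuming \(U=V\) and deriving \(L=K\), whereas you exhibit a single witness \(\pid{a}_L\in V\setminus U\); the underlying computation is identical.
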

\begin{proof}
    Since \( K' \subset L' \) we have \( U \subseteq V \). Let \( W \) be any other algebra of \( \mathsf{R} \), and \( f, g : V \to W \) two homomorphisms that agree on \( U \). The image of a reflexive equivalence element is a reflexive equivalence element, so \( f \) and \( g \) restrict to maps
    \begin{equation*}
        f |_{L'}, \, g|_{L'} : L' \to \mlat{W}.
    \end{equation*}
    By Theorem \ref{thm:modularlat}, \( \mlat{W} \) is a modular lattice under fusion and meet, and \( f \) and \( g \) preserve these operations, so these restrictions are homomorphisms of modular lattices. By assumption, \( f \) and \( g \) agree on \( U \), and since \( K' \subseteq U \) they must also agree on \( K' \). But \( K' \) is an epic sublattice of \( L' \), so \( f \) and \( g \) must also agree on \( L' \). Thus \( f|_{L'} = g|_{L'} \), and so \( f = g \) since \( L' \) is the generating set of \( V \). Thus \( U \) is an \( \mathsf{R} \)-epic subalgebra of \( V \).

    However, \( U \) is a proper subalgebra. Let \( \phi : \alg{K} \to \alg{L} \) be the complete embedding of Theorem \ref{thm:embed}, and let \( Z = \im \phi \). Since \( I_K(K) \subseteq \alg{K} \), we have
    \begin{equation*}
        \phi(I_K(K)) = I_L(K) = K',
    \end{equation*}
    so \( K' \subseteq Z \), which implies \( U \subseteq Z \) since \( U \) is the smallest subalgebra that contains \( K' \). Now for contradiction suppose that \( U = V \). Then \( L' \subseteq V = U \subseteq Z \). Thus for any \( x \in L \), we have \( \pid{x}_L \in Z \), so there is some \( S \subseteq K \) such that \( \phi(S) = \pid{x}_L \). In particular then, there is an \( a \in S \) such that
    \begin{equation*}
        x \in \phi \parens*{\braces*{a}} \subseteq \pid{a}_L \implies x \leq a.
    \end{equation*}
    On the other hand,
    \begin{equation*}
        a \in \phi \parens*{\braces*{a}} \subseteq \phi \parens*{S} = \pid{x}_L \implies a \leq x.
    \end{equation*}
    Thus \( x = a \), so \( x \in K \). But the element \( x \in L \) was arbitrary, so \( L = K \), which is a contradiction.
\end{proof}

\begin{thm}
ES fails for any class \( \mathsf{R} \) of relation algebras with \( \mathsf{LRA} \subseteq \mathsf{R} \subseteq \mathsf{ARA} \).
\end{thm}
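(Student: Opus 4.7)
The plan is to reduce the statement entirely to Theorem \ref{thm:episurj} and Freese's lattice example. Theorem \ref{thm:episurj} converts a proper epic sublattice of a complete modular lattice into a proper \(\mathsf{R}\)-epic subalgebra of a lattice complex algebra. Since subalgebras of lattice complex algebras lie in \(\mathsf{LRA} \subseteq \mathsf{R}\) by hypothesis, such a pair immediately witnesses that ES fails in \(\mathsf{R}\). So the entire task is to produce a single complete modular lattice \(L\) together with a proper epic complete sublattice \(K\).

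To get the lattice pair, I would invoke Freese's Theorem 3.3 of \cite{freese1979}, which supplies modular lattices \(K_0 \subset L_0\) such that the inclusion \(K_0 \hookrightarrow L_0\) is an epimorphism in the category of modular lattices but not surjective. If Freese's lattices can be taken to be finite, they are automatically complete, \(K_0\) is automatically a complete sublattice of \(L_0\), and we may set \(K := K_0\) and \(L := L_0\). Otherwise, one passes to a completion of the pair (for instance via the ideal lattice construction \(\Id L_0\), which is modular and complete whenever \(L_0\) is modular, embedding \(K_0\) through its principal-ideal image) and verifies that the epic property survives the completion. This completeness question is the main technical point of the argument, and the only place where a detail outside of the preceding sections is needed.

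With \(K \subseteq L\) a complete sublattice of a complete modular lattice such that \(K\) is a proper epic sublattice of \(L\), I would then apply the construction preceding Theorem \ref{thm:episurj}: form \(K' = I_L(K)\) and \(L' = I_L(L)\) inside \(\mlat{\alg{L}}\), and let \(U\) and \(V\) be the subalgebras of \(\alg{L}\) generated by \(K'\) and \(L'\) respectively. Both \(U\) and \(V\) are subalgebras of the lattice complex algebra \(\alg{L}\), hence belong to \(\mathsf{LRA}\) and therefore to \(\mathsf{R}\). Theorem \ref{thm:episurj} then immediately gives that \(U\) is a proper \(\mathsf{R}\)-epic subalgebra of \(V\), so the inclusion \(U \hookrightarrow V\) is an epimorphism in \(\mathsf{R}\) that fails to be surjective. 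This is exactly the failure of ES for \(\mathsf{R}\).

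The main obstacle, as noted, is matching Freese's example to the completeness hypothesis of Theorem \ref{thm:embed}; once that is in hand, the rest is a direct citation of Theorem \ref{thm:episurj}. No further verification of the Boolean monoid axioms or of epimorphism-testing against arbitrary \(\mathsf{R}\)-algebras is required, because Theorem \ref{thm:episurj} already handles test algebras \(W\) ranging over all of \(\mathsf{R}\) by exploiting \(\mathsf{R} \subseteq \mathsf{ARA}\) through Theorem \ref{thm:modularlat}.
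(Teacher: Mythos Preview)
Your approach is essentially the paper's: invoke Freese's proper epic sublattice and feed it to Theorem \ref{thm:episurj}. The one place you leave open is exactly the one the paper settles differently than either of your two suggestions. Freese's lattices are not finite, so your option (a) does not apply as stated; and your option (b), passing to the ideal completion, would require you to check that the epimorphism property survives the passage from \(K_0 \hookrightarrow L_0\) to \(\Id K_0 \hookrightarrow \Id L_0\), which is not automatic and which you do not verify. The paper instead observes that Freese's \(B\) has no infinite chains, hence is complete (Davey--Priestley, Theorem 2.41(iii)), and that \(A\) is a \(\{0,1\}\)-sublattice of \(B\), hence a complete sublattice (Davey--Priestley, Theorems 2.40 and 2.41(i)). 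With that in hand, Theorem \ref{thm:episurj} applies directly to \(K=A\) and \(L=B\) with no completion step needed.
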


\begin{proof}
    In \cite[Theorem 3.3]{freese1979}, Freese constructs modular lattices \( A \subset B \) such that \( A \) is a proper epic sublattice of \( B \). \( B \) has no infinite chains, so is complete by Davey and Priestley \cite[Theorem 2.41 (iii)]{davey2002}. Likewise, \( A \) is a \{0,1\}-sublattice of \( B \), and as a sublattice is complete by Theorems 2.40 and 2.41 (i) of the same. We can thus apply Theorem \ref{thm:episurj} to \( A \) and \( B \), and the result follows.
\end{proof}

\begin{cor}
    ES fails for the varieties of abelian, symmetric, and dense symmetric relation algebras.
\end{cor}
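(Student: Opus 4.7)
The plan is to apply the preceding theorem three times, once for each named variety. Since that theorem asserts the failure of ES for any class $\mathsf{R}$ with $\mathsf{LRA} \subseteq \mathsf{R} \subseteq \mathsf{ARA}$, the task reduces to checking that each of the three varieties (abelian, symmetric, and dense symmetric relation algebras) sits between $\mathsf{LRA}$ and $\mathsf{ARA}$.

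First I would verify that $\mathsf{LRA}$ is contained in the variety of dense symmetric relation algebras. By an earlier theorem of this paper, every lattice complex algebra $\alg{L}$ for a modular lattice $L$ with $0$ is a Boolean monoid, and by definition Boolean monoids are precisely the dense symmetric relation algebras. Since being abelian, symmetric, and dense are each equational conditions, they are inherited by subalgebras, so every member of $\mathsf{LRA}$ is itself a dense symmetric relation algebra. Next I would observe that any symmetric relation algebra is automatically abelian: if $a^{\smallsmile} = a$ for every element, then axiom 7 together with symmetry of $a \circ b$ gives $a \circ b = (a \circ b)^{\smallsmile} = b^{\smallsmile} \circ a^{\smallsmile} = b \circ a$.

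Combining these observations yields the chain
\[ \mathsf{LRA} \subseteq \text{Dense Symmetric RA} \subseteq \text{Symmetric RA} \subseteq \text{Abelian RA} = \mathsf{ARA}, \]
so each of the three varieties falls in the range required by the preceding theorem, and ES fails for each. There is no real obstacle here: the corollary is a straightforward bookkeeping consequence of the main theorem, since all of the substantive work has already been done in constructing the embedding $\alg{K} \to \alg{L}$ and in proving the general failure-of-ES result.
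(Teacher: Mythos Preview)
Your argument is correct and is exactly the intended one: the paper states the corollary without proof because it is immediate from the preceding theorem once one observes the chain $\mathsf{LRA} \subseteq \text{dense symmetric} \subseteq \text{symmetric} \subseteq \text{abelian} = \mathsf{ARA}$, and you have simply spelled out those inclusions explicitly.
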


\begin{cor}
    The Beth Definability Property fails for \textbf{KR}.
\end{cor}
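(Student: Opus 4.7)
The plan is to combine the algebraic characterization of Beth Definability recalled in the introduction with the preceding corollary. Specifically, Németi's result (as developed by Blok and Hoogland) states that for an algebraizable logic $L$, the Beth Definability Property holds iff every epimorphism in $\Alg L$ is surjective, i.e., the ES property holds for $\Alg L$. So the task reduces to identifying $\Alg \textbf{KR}$ and invoking the failure of ES in that class.

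First, I would recall that $\Alg \textbf{KR}$ is the class of Boolean monoids, as established at the start of Section 2 by adding $a \land \neg a = 0$ to the De Morgan monoid axioms that correspond to $\textbf{R}^{\textbf{t}}$. I would then use the equivalent description noted there: Boolean monoids are precisely the dense symmetric relation algebras.

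Next, I would apply the preceding corollary, which asserts that ES fails for the variety of dense symmetric relation algebras. Concretely, Theorem \ref{thm:episurj} applied to the Freese lattices $A \subset B$ produces a proper epic sub-Boolean-monoid $U$ of $V$ inside the lattice complex algebra $\alg{B}$; since lattice complex algebras are Boolean monoids, $U$ and $V$ are themselves Boolean monoids, witnessing the failure of ES in $\Alg \textbf{KR}$.

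Finally, invoking the algebraizability of $\textbf{KR}$ and the Németi/Blok--Hoogland correspondence, failure of ES in $\Alg \textbf{KR}$ yields failure of the Beth Definability Property for $\textbf{KR}$. There is no real obstacle here; the entire content of the proof lies in Theorem \ref{thm:episurj} and the identification $\Alg \textbf{KR} = $ dense symmetric relation algebras, both already available. The proof is therefore a short citation chain, and I would simply cite \cite{henkin1985} and \cite{blok2006} for the algebraic characterization and the previous corollary for ES failure.
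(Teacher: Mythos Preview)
Your proposal is correct and mirrors the paper's approach exactly: the corollary is stated without proof because it follows immediately from the preceding corollary (ES fails for dense symmetric relation algebras, i.e., Boolean monoids) together with the N\'emeti/Blok--Hoogland correspondence recalled in the introduction. Your citation chain is precisely the intended argument.
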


\section{Conclusion}

Using modular lattices constructed by Freese, we have shown that epimorphisms need not be surjective in a wide class of relation algebras. This class includes the Boolean monoids, which shows that the Beth Definability Property fails for the super relevant logic \textbf{KR}. This should be contrasted with the result of \cite[Theorem 8.5]{bezhanishvili2017}, which shows that the Beth Definability Property does hold for the super relevant logic \textbf{RM}. The super relevant logics thus exhibit more diversity than the relevant logics, where this property fails uniformly.

\subsection*{Acknowledgements}

The author would like to thank Katalin Bimb\'{o} for suggesting this problem for his undergraduate research project and supervising his work on it. Her advice and never-ending encouragement were instrumental in finding a solution.

\bibliography{bethkr}
\bibliographystyle{plain}

\end{document}